\newcommand{\R}{{\mathbb R}}
\newcommand{\K}{{\mathbb K}}
\newcommand{\Ga}{{\Gamma}}
\newcommand{\De}{{\Delta}}
\newcommand{\na}{{\nabla}}
\newcommand{\ou}{{\underline{u}}}
\newcommand{\n}{{\underline{n}}}
\newcommand{\ta}{{\underline{t}}}
\newcommand{\oo}{{\underline{\omega}}}
\newcommand{\pa}{{\partial}}
\newcommand{\m}{{\mu}}
\newcommand{\cq}{{\widehat{q}}}
\newcommand{\cC}{{\widehat{C}}}
\newtheorem{definition}{Definition}[section]
\newtheorem{theorem}{Theorem}[section]
\newtheorem{lemma}[theorem]{Lemma}
\numberwithin{equation}{section}
\def\vs1{\vspace{1ex}}
\def\O{\Omega}
\def\pa{\partial}
\def\dy{\displaystyle}
\def\be{\begin{equation}}
\def\ba{\begin{array}}
\def\ea{\end{array}}
\def\ee{\end{equation}}
\begin{document}
\title{\bf\ On the singular $p$-Laplacian system under Navier slip type boundary conditions. The
gradient-symmetric case.}
\author{ H.~Beir\~ao da Veiga \thanks{Dipartimento di Matematica,
 Universit\`{a} di Pisa, Via Buonarroti 1/C, 56127 Pisa, Italy,
 email: bveiga@dma.unipi.it}
 }
\maketitle
\begin{abstract} We consider the $\,p$-Laplacian system of $N$ equations in $n$ space variables, $\,1<\,p\leq\,2\,,$
under the homogeneous Navier slip boundary condition. Furthermore,
the gradient of the velocity is replaced by the, more physical,
symmetric gradient. We prove $\,W^{2,\,q}\,$ regularity, up to the
boundary, under suitable assumptions on the couple $\,p,\,\,q\,$.
The singular case $\mu=\,0$ is covered.
\end{abstract}

\vspace{0.2cm}

\noindent \textbf{Keywords:} Singular p-Laplacian elliptic systems,
slip boundary conditions, regularity up to the boundary.



\section{Introduction. The main result}
We consider the system
\begin{equation}\label{NSC}
-\,\nabla \cdot
\,\big(\,(\,\m+|\,D\,u|^2\,)^{\frac{p-2}{2}}\,D\,u\,\big) =\,f \
\mbox{ in } \O
\end{equation}
under the Navier slip boundary condition \eqref{bcns}. Here, and in
the sequel, $u$ is an $N$-dimensional vector field defined in a
bounded, open, connected, subset  $\,\O\,$ of $\R^n\,,$ locally
situated on one side of its boundary, a smooth manifold $\Ga$. We
denote by $\n$ the outer unit normal to $\partial\Omega\,$ and by
$\mu\,\geq \,0$ a given parameter. The vector field $f$ is given.
For convenience, we will assume that $\O\,$ has not axis of
symmetry. The reason will be clear below. We are particularly
interested in the singular case
$\m=\,0\,.$\par%
By
$$
D \,u=\,\nabla u+\,\nabla^{T} u
$$
we denote the symmetric gradient. So
\begin{equation}
D_{i\,j}(u)=\,\pa_i\,u_j +\,\pa_j\,u_i\,,
\end{equation}
 where $i,\,j=\,1,2,...,n\,$. Often we simply write $D$, provided that
the vector field under consideration follows from the context.\par%
Equation \eqref{NSC} has been considered by mathematicians mostly
with $\,D \,u\,$ replaced by $\,\nabla u\,.$ It is worth noting that
\eqref{NSC} satisfies the Stokes Principle (see \cite{stokes}, and
\cite{serrin} page 231), a significant physical requirement of
isotropy,
which does not hold if we replace $\,D \,u\,$ by $\,\nabla u\,.$\par%
In the following we denote by  $\ta(u)\,$ the Cauchy stress vector
$$
\ta(u)=\,(\,D\,u)\cdot\,\n\,.
$$
So,
$$
t_j=\,(\pa_i\,u_j +\,\pa_j\,u_i\,)\,n_i\,,
$$
where (here and in the sequel) we use the summation convention on repeated indexes.\par%
The homogeneous Navier slip type boundary condition, see
\cite{Navier}, says that the velocity is tangent to the boundary,
and the tangential component of the stress vector $\ta(u)\,$
vanishes on the boundary. We write this condition in the following
form
\begin{equation}\label{bcns}
\left\{
\begin{array}{l}
\ou\cdot \n=\,0,\\
(\ta(u))_\tau=\,0\,,
\end{array}
\right.
\end{equation}
where in general the subscript $\tau$ denotes tangential component.
For a mathematical study of the above boundary condition see, for
instance, \cite{bvadvances} and \cite{so-sca}, where this boundary
condition is associated to the linear Stokes problem.\par%
By $L^p(\O)$ and $W^{m,p}(\O)$, $\,1 \leq\,p \leq\,\infty\,$,
$\,m\,$ nonnegative integer, we denote the usual Lebesgue and
Sobolev spaces, with the standard norms $\|\cdot\|_p\,$ and
$\|\,\cdot\,\|_{m,p}\,$, respectively.\par%
In notation concerning  norms and functional spaces, we do not
distinguish between scalar and vector fields. For instance
$L^p(\Omega;\R^N)= [L^p(\O)]^N$, $N>1$, is denoted simply by
$L^p(\O)$. We define
$$
V_p=\,V_p(\O)=\,\{ v \in\,W^{1,\,p}(\O):\, v\cdot \n=\,0
\,\textrm{\,on\,}\, \Ga\,\}.
$$
The linear space $\,V_p(\O)\,$, endowed with one of the following
norms
$$
\big(\,\|\,v\,\|_p +\,\|\,D\,v\,\|_p\,\big)^\frac1p \,,
\quad\big(\,\|\,v\,\|_p +\,\|\,\na\,v\,\|_p\,\big)^\frac1p \,,\quad
\|\,\na\,v\,\|_p\,,
$$
is a Banach space. The above norms are equivalent in $\,V_p(\O)\,.$
Further, since we assume that the domain $\,\O\,$ has not axis of
symmetry, it follows that $\,\|\,D\,v\,\|_p\,$ alone is a norm. For
a quite complete discussion on this point, we refer to
\cite{bvadvances}. Without this hypothesis, existence or uniqueness
of the solution may fail, depending on the particular external force
$\,f\,$. We believe that this should be not difficult to show, by
appealing to counter examples. However, a complete study of the
possible phenomena (due to nonlinearity) should be difficult but
quite interesting. Note that, if we replace  $\,D \,u\,$ by
$\,\nabla u\,,$ the above assumption on $\,\O\,$
is superfluous.\par For the proof of Korn's inequality we refer,
for instance, to \cite{so-sca} and \cite{pares}.\par%
Our main result is the following.
\begin{theorem}\label{teoremaq}
Assume that $\mu\geq 0\,,$ and let $f\in L^q(\O)\,,$ where
$\,q>\,n\,.$ Let $C_q=\,C(q,\,\O)\,$ be the constant that appears in
the linear estimate \eqref{tse} below. Assume that
\begin{equation}\label{cqom}%
(2-\,p)\,C_q <\,1 \,.
\end{equation}
Then, the weak solution $\,u\,$ to the problem \eqref{NSC},
\eqref{bcns} belongs to $\,W^{2,q}(\O)\,$. Moreover, the following
estimate holds
\begin{equation}\label{dnq}
 \|u\|_{2,q}\leq C
 \,\left(\|f\|_q+\|f\|_{q}^\frac{1}{p-1}\right)\,.
\end{equation}
\end{theorem}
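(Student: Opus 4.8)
The plan is to convert the quasilinear system into a \emph{linear} Stokes-type problem for $u$ whose right-hand side carries the factor $(2-p)$ in front of the second derivatives of $u$, and then to exploit the smallness hypothesis \eqref{cqom} to absorb that part into the left-hand side. Writing $g=g(Du)=(\m+|Du|^2)^{\frac{p-2}{2}}$ and expanding the divergence in \eqref{NSC}, I obtain componentwise $-g\,\pa_i D_{ij}(u)-(\pa_i g)\,D_{ij}(u)=f_j$, hence
\[
-\pa_i D_{ij}(u)=\frac{f_j}{g}+\frac{(\pa_i g)\,D_{ij}(u)}{g}\,.
\]
The left-hand side is exactly the linear operator $v\mapsto-\na\cdot D(v)$ to which the estimate \eqref{tse}, with the \emph{same} boundary conditions \eqref{bcns}, applies; dividing by the scalar $g$ does not alter \eqref{bcns}, so no new boundary contributions appear. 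In the rigorous version one first replaces $\m$ by $\m+\ve$ to keep $g$ finite and strictly positive, derives the estimate uniformly in $\ve$, and lets $\ve\to0$.

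Second, the algebraic heart. From $\pa_i g=(p-2)(\m+|Du|^2)^{\frac{p-4}{2}}D_{kl}(u)\,\pa_i D_{kl}(u)$ one gets
\[
\frac{(\pa_i g)\,D_{ij}(u)}{g}=(p-2)\,\frac{D_{ij}(u)\,D_{kl}(u)}{\m+|Du|^2}\,\pa_i D_{kl}(u)\,,
\]
so after the cancellation of $g$ the right-hand side is manifestly regular even where $Du=0$ and $\m=0$. Since $\dfrac{|D_{ij}(u)\,D_{kl}(u)|}{\m+|Du|^2}\le\dfrac{|Du|^2}{\m+|Du|^2}\le1$, the coefficient multiplying $\na D(u)$ has norm at most $(2-p)$, whence this term is controlled in $L^q$ by $(2-p)$ times the second-order quantity that \eqref{tse} estimates through $C_q$ --- uniformly in $\m\ge0$, which is precisely what opens the singular case.

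Third, applying \eqref{tse} to the rewritten system and using the previous bound gives, schematically,
\[
\|u\|_{2,q}\le C_q\,\Big\|\frac{f}{g}\Big\|_q+(2-p)\,C_q\,\|u\|_{2,q}\,,
\]
so \eqref{cqom} lets me absorb the last term and conclude $\|u\|_{2,q}\le\frac{C_q}{1-(2-p)C_q}\,\|f/g\|_q$. Because $1/g=(\m+|Du|^2)^{\frac{2-p}{2}}$ and $q>n$, the embedding $W^{2,q}(\O)\hookrightarrow C^1(\overline{\O})$ yields $Du\in L^\infty$ with $\|Du\|_\infty\le c\,\|u\|_{2,q}$, so that $\|f/g\|_q\le\|f\|_q(\m+\|Du\|_\infty^2)^{\frac{2-p}{2}}\lesssim\|f\|_q\big(1+\|u\|_{2,q}^{2-p}\big)$. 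Inserting this and using $0\le2-p<1$, the scalar inequality $X\le A+BX^{2-p}$ with $X=\|u\|_{2,q}$ yields $X\lesssim A+B^{1/(p-1)}$; tracing the constants, the $\m$-part produces the term $\|f\|_q$ and the genuinely nonlinear part $\|f\|_q\,\|u\|_{2,q}^{2-p}$ produces $\|f\|_q^{1/(p-1)}$, which is exactly \eqref{dnq}.

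Finally, the obstacle. The manipulation above is formal, since in the singular range the weak solution (which I would first produce, together with its energy bound, by monotonicity and coercivity on $V_p(\O)$, Korn's inequality being a true norm thanks to the no-symmetry-axis assumption on $\O$) is not a priori twice differentiable. Making the argument rigorous is the main difficulty: I expect to run it on the nondegenerate, regularized problems using tangential difference quotients $\Dph,\Dmh$ to bound the tangential second derivatives near $\Ga$ compatibly with \eqref{bcns} --- thereby recovering the constant $C_q$ of \eqref{tse} --- and to use the equation itself for the normal second derivatives. The two truly delicate points are (i) performing these boundary difference-quotient estimates in a way consistent with the coupled slip conditions, and (ii) controlling the circular dependence between $\|u\|_{2,q}$ and $\|Du\|_\infty$ uniformly in the regularization; this is exactly where both the sublinearity $2-p<1$ and the smallness condition \eqref{cqom} are indispensable.
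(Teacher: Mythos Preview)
Your algebraic reduction is exactly the paper's: rewrite \eqref{NSC} as the linear problem $-\nabla\cdot(Du)=(p-2)\,G(u)+(\mu+|Du|^2)^{\frac{2-p}{2}}f$ with the pointwise bound $|G(u)|\le|\nabla Du|$ (this is \eqref{formula}--\eqref{geve}), and then combine \eqref{tse}, the smallness \eqref{cqom}, and the Sobolev embedding for $q>n$ to close in $W^{2,q}$. The difference lies in how the estimate is made rigorous. You derive it as an \emph{a priori} inequality on the solution itself and propose to justify it by regularizing and running tangential difference quotients near $\Ga$. The paper instead avoids difference quotients entirely by a fixed-point construction for $\mu>0$: freeze $v$ in the right-hand side, solve the genuinely linear problem $-\nabla\cdot(Du)=F(v)$ under \eqref{bcns} via Theorem~\ref{seva} (so the constant $C_q$ enters directly from \eqref{tse}, with no need to reproduce it), and show that the resulting map $T$ sends the ball $\K(R)=\{v:\|\nabla Dv\|_q\le R\}$ into itself for the value of $R$ given in \eqref{condidois}. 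This produces the strong solution in $W^{2,q}$ together with the uniform bound \eqref{dnq2}; the singular case $\mu=0$ is then reached by compactness and passage to the limit in the weak formulation \eqref{bufi}, using \eqref{tensorS1} to handle the nonlinear term. The fixed-point route neatly sidesteps the two ``delicate points'' you flag: the linear $W^{2,q}$ theory is used as a black box, and the circularity between $\|u\|_{2,q}$ and $\|Du\|_\infty$ never arises because one works with a frozen $v\in\K(R)$ rather than with the unknown $u$. Your scalar inequality $X\le A+BX^{2-p}$ is precisely the a priori shadow of the paper's choice of $R$.
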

The proofs also apply, in a simpler way, to the Dirichlet boundary
value problem. In section \ref{ssei} we consider the boundary value
problem \eqref{bcnos}. The singular case remains open. Finally, we
refer to section \ref{ssetes} for an application to the
fluid mechanics system \eqref{NSCCC}. %

\vspace{0.2cm}

Regularity of solutions for systems like \eqref{NSC} has received
substantial attention from many authors. We refer, for instance, to
references \cite{acerbi}, \cite{Hamburger}, \cite{Lieb88},
\cite{Liu}, \cite{Tolk2}, \cite{ura}. Other related results may be
found in \cite{anton-sh-1}, \cite{anton-sh-2}, \cite{BDVCRIplap},
\cite{DB}, \cite{DBM}, \cite{FS}, \cite{MP}, and references therein.\par%
\vspace{0.2cm}

The plan of the paper is the following: In section \ref{sdue} we
recall the existence and uniqueness result of the weak solution. In
section \ref{stre} we introduce an auxiliary linear problem and
state (by appealing to well know classical results) the existence of
solutions to this linear problem in spaces $W^{2,\,q}(\O)\,.$ In
section \ref{stre} we formulate the non-linear problem in a more
explicit, formally equivalent, form in which the non-linearities are
(roughly speaking) concentrated in the right hand side (see equation
\eqref{formfina} below). Furthermore, we appeal to this formulation
to define "strong solution". In section \ref{squattro}, by assuming
$\,\m>\,0\,$ and by appealing to the result stated in section
\ref{stre} for the auxiliary linear problem, we show that the strong
solution introduced in section \ref{squattro} exists and belongs to
$\,W^{2,\,q}(\O)\,,$ for each $\,\m>\,0\,.$ Moreover, the estimates
obtained are independent of $\,\m\,.$ This last property allows us
to extend, in section \ref{scinque}, the regularity result to the
singular case $\,\m=\,0\,$ by passing to the limit in the
variational formulation \eqref{bufi} as $\,\m\,$ tends to zero. In
section \ref{ssei} we consider the boundary value problem
\eqref{bcnos}. Finally, in  section \ref{ssetes}, we appeal to a
recent result proved by Petr Kaplick\'y and Jakub Tich\'y, to show
that the result claimed in theorem \ref{teoremaq} applies to
solutions to the system \eqref{NSCCC}, in the particular case
$\,q=\,\cq\,$, see \eqref{errq2}.
\section{Existence and uniqueness  of the weak solution}\label{sdue}
Existence and uniqueness of weak solutions follows from well know
results. Let us recall some basic points. Set
\begin{equation}\label{buh}
B(\,D\,u\,)=\,(\,\m+|\,D\,u|^2\,)^{\frac{p-2}{2}}\,.
\end{equation}
By appealing to the identity  $\,D_{ij}u \,D_{ij}v=\,2\,D_{ij}u
\,\pa_j v_i\,,$ integration by parts shows that
\begin{equation}\label{bufe} \ba{ll}\vspace{1ex}\displaystyle%
\frac12 \,\int_{\O}\ \ B (D\,u)\, \cdot D\,u  \cdot D\, v \,dx
=-\,\int_{\O}\,\na \cdot\,\big(\,B(D\,u)\,D(u)\,\big) \cdot\,v
\,dx\,  \\
\dy +\,\int_{\Ga} \,B(D\,u)\,[\,(D\,u)\,\cdot\,v \,\cdot\,n\,]
\,dS\,.%
\ea%
\end{equation}
Hence,
\be\label{bufi} \ba{ll}\vspace{1ex}\displaystyle%
\frac12 \,\int_{\O}\ \ B (D\,u)\, \cdot D\,u \cdot D\, v \,dx
=\,-\,\int_{\O}\,\na \cdot\,\big(\,B(D\,u)\,D(u)\,\big)
\cdot\,v \,dx\,  \\
\dy +\,\int_{\Ga} \,B(D\,u)\,(\ta(u))_{\tau}\cdot v \, dS\,,
\ea\ee%
provided that $\,v\cdot\,n=\,0\,$ on $\,\Ga\,.$\par%
This last identity justifies the following definition.
\begin{definition}\label{noit}
Let $f \in V'_p(\O)$. We say that $u$ is a {\rm weak solution} of
problem \eqref{NSC}, \eqref{bcns} if $\,u\,\in\, V_p(\O)$ satisfies
\be\label{bufi} \ba{ll}\vspace{1ex}\displaystyle%
\frac12 \,\int_{\O}\ \ B (D\,u)\, \cdot D\,u \cdot D\, v \,dx
=\,\int_{\O}\, f\, \cdot\, v \,dx\,,
\ea\ee%
for all $\,v \in \,V_p(\O)\,$.
\end{definition}
Existence and uniqueness of the above weak solution, for any fixed
$\,\m \geq\,0\,,$ follows by appealing to the theory of monotone
operators, see J.-L. Lions \cite{lions}.%
\section{An auxiliary linear problem}\label{stre}
In this section we consider the linear problem
\begin{equation}\label{NSC2}
-\,\nabla \cdot \,\big(\,D\,u\,\big) =\,F \ \mbox{ in } \O
\end{equation}
under the boundary condition \eqref{bcns}, and state an auxiliary
result to be used in the next sections. This particular result
follows from well known general results.\par%
Note that equation \eqref{NSC2} may be also written in the
equivalent form (not used in this section)
\begin{equation}\label{forteF}%
-\,\Delta\,u -\, \nabla\,(\nabla \cdot\,u\,)= F\,.
\end{equation}%
\begin{definition}\label{noit}
Let $f \in V'_2(\O)$. We say that $u$ is a {\rm{weak solution}} of
problem \eqref{NSC2}, \eqref{bcns} if $\,u\,\in\, V_2(\O)$ satisfies
\be\label{basta}%
\frac12 \,\int_{\O}\ \  D\,u \cdot D\, v \,dx =\,\int_{\O}\, F\,
\cdot\, v \,dx\,,
\ee%
for all $\,v \in \,V_2(\O)\,$.
\end{definition}
Coerciveness of the bilinear form on the left hand side of
\eqref{basta} follows here by appealing to the fact that
$\,\|\,D\,v\,\|\,$ alone is a norm in $\,V_2(\O)\,$, since we have
assumed that $\O$ has not axis of symmetry. Hence, existence,
uniqueness, and the standard estimate holds for the above problem.

\vspace{0.2cm}

Next we consider the regularity of the solutions to the above linear
system \eqref{NSC2} (or, equivalently, \eqref{forteF}) under the
boundary condition \eqref{bcns}. The $\,W^{2,\,2}(\O)\,$ regularity
may be proved, for instance, by following \cite{so-sca} and
\cite{bvadvances}. The reader may easily adapt the argument
developed in \cite{so-sca}, section 4. Further, as claimed in
reference \cite{so-sca} section 4, by appealing to results proved in
reference \cite{so2} (see also \cite{a-d-n}), the
$\,W^{2,\,q}(\O)\,$ regularity, for arbitrarily large exponents $q$,
follows. Actually, under suitable, canonical, regularity assumptions
on $F$ and $\O\,,$ $\,W^{m,\,q}(\O)\,$ regularity for arbitrarily
large values of $m\geq\,2\,$ follows.\par%
Alternatively, we may follow \cite{Sol71} to show that the system
\eqref{forteF}, \eqref{bcns} is of Petrovks\u\i\ type (a subclass of
Agmon-Douglis-Nirenberg elliptic systems). See, in particular, the
Theorem 5.1 in reference \cite{Sol71}. This allows a simplified
integral representation formula for the solutions to the above
linear problem. Moreover, for Petrovks\u\i's systems, the
$W^{2,\,2}$-regularity yields full $W^{m,\,q}$-regularity, provided
that the data are sufficiently smooth. In particular, there is a
constant $\widetilde{C}_q \,$ such that
\begin{equation}\label{tse2}%
\|\,u\,\|_{2,\,q} \leq\,\widetilde{C}_q \,\|\,F\,\|_q \,.
\end{equation}
Summarizing, the following result holds.
\begin{theorem}\label{seva}
Consider the linear boundary value problem \eqref{NSC2},
\eqref{bcns}. Assume that $\,F \in\,L^q(\O)\,,$ for some $\,q
\,\geq\,2\,.$ Then, the solution $u$ to the above linear problem
belongs to $W^{2,\,q}(\O)\,.$ Furthermore, there is a constant
$\,C_q =\,C_q(q,\,\O)\,,$ such that
\begin{equation}\label{tse}%
\|\,\na\,Du\,\|_q \leq\,C_q\,\|\,F\,\|_q \,.
\end{equation}
\end{theorem}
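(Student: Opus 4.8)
The plan is to treat \eqref{NSC2} as a second-order linear elliptic system in the sense of Agmon--Douglis--Nirenberg (ADN), to check that the boundary operators in \eqref{bcns} satisfy the complementing condition, and then to invoke the ADN $L^q$-theory. First I would pass to the equivalent form \eqref{forteF}, $-\De\,u-\na(\na\cdot u)=F\,,$ whose principal part has symbol $L_{ij}(\xi)=|\xi|^2\,\delta_{ij}+\xi_i\,\xi_j\,.$ A direct computation gives $\det L(\xi)=2\,|\xi|^{2n}\,,$ which is nonzero for $\xi\neq 0\,,$ so the system is elliptic; moreover $\det L(\xi+\tau\eta)=2\,(|\xi|^2+2\tau\,\xi\cdot\eta+\tau^2\,|\eta|^2)^n\,,$ and for linearly independent real $\xi,\eta$ the inner quadratic has (by strict Cauchy--Schwarz) complex conjugate roots, each therefore appearing with multiplicity $n\,,$ so there are exactly $n$ roots $\tau$ in the upper half-plane and $n$ in the lower. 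Thus the system is properly elliptic with $m=n\,.$ Counting the boundary conditions in \eqref{bcns} — one scalar condition $\ou\cdot\n=0$ of order $0$ and $n-1$ tangential conditions $(\ta(u))_\tau=0$ of order $1$ — gives exactly $n$ conditions, the number required.

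Next I would settle the base case $q=2\,.$ Coercivity of the form in \eqref{basta} on $V_2(\O)$ holds because $\|\,D\,v\,\|$ alone is a norm there (no axis of symmetry), so Lax--Milgram yields existence, uniqueness and $\|\,D\,u\,\|\leq C\,\|F\|\,.$ To upgrade to $W^{2,2}\,,$ I would follow the difference-quotient argument of \cite{so-sca}, section~4: near $\Ga$ one flattens the boundary and tests \eqref{basta} with $v=\Dmh\,\Dph\,u\,,$ i.e. second tangential difference quotients arranged so that $v$ remains in $V_2$ (still satisfying $v\cdot\n=0$), obtaining uniform bounds on the tangential second derivatives; the purely normal second derivatives are then recovered algebraically from \eqref{forteF} via ellipticity. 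The key structural point is that $(\ta(u))_\tau=0$ is precisely the \emph{natural} boundary condition attached to the form \eqref{bufe}, so the integration by parts produces no obstructing boundary terms.

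The crucial and most delicate step is the verification of the complementing (Lopatinski\u\i--Shapiro) condition for the pair \eqref{forteF}, \eqref{bcns}. After freezing coefficients and flattening $\Ga$ at a point, one fixes a nonzero tangential frequency $\xi'$ and studies the decaying solutions on the half-line of $L(\xi',d/dt)\,v=0\,;$ since here the characteristic roots reduce to $\tau=\pm i\,|\xi'|\,,$ each of multiplicity $n\,,$ the decaying solution space is $n$-dimensional, and one must show that the only such solution annihilated by both boundary operators (the normal trace and the tangential stress) is $v\equiv 0\,.$ This is exactly where \cite{Sol71}, Theorem~5.1 enters: it identifies \eqref{forteF}, \eqref{bcns} as a system of Petrovski\u\i\ type inside the ADN class, yielding both the complementing condition and a convenient integral representation. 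I expect this verification to be the main obstacle in a fully self-contained treatment.

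Finally, with ellipticity, proper ellipticity and the complementing condition established, the ADN estimate — equivalently, Calder\'on--Zygmund bounds applied to the representation formula of \cite{Sol71}, or the higher-integrability statement quoted from \cite{so2}, \cite{a-d-n} — gives $\|\,u\,\|_{2,q}\leq\widetilde{C}_q\,\|\,F\,\|_q$ for every $q\geq 2\,,$ which is \eqref{tse2}. The lower-order term normally present in the a priori estimate is absorbed using the triviality of the kernel (coercivity/uniqueness in $V_2$) together with the compact embedding $W^{2,q}(\O)\hookrightarrow L^q(\O)\,,$ by the standard contradiction argument. Since $\na\,Du$ consists of second-order derivatives of $u\,,$ the bound $\|\,\na\,Du\,\|_q\leq C\,\|\,u\,\|_{2,q}$ is immediate, which delivers \eqref{tse} and completes the proof.
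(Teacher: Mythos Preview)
Your proposal is correct and follows essentially the same route as the paper's own treatment: the paper does not give a detailed proof of Theorem~\ref{seva} either, but rather summarizes that $W^{2,2}$ regularity follows from \cite{so-sca}, \cite{bvadvances} (the difference-quotient argument you sketch), and that $W^{2,q}$ regularity then follows by identifying \eqref{forteF}, \eqref{bcns} as a Petrovski\u\i/ADN system via \cite{Sol71}, Theorem~5.1, together with \cite{so2}, \cite{a-d-n}. Your outline simply fills in more of the intermediate detail (the explicit symbol computation, the count of boundary conditions, the absorption of the lower-order term) along the same path.
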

Clearly, $\,C_q \leq\,\widetilde{C}_q\,.$ The pointwise estimate
\begin{equation}\label{dirup}
|\nabla^2\,u| \leq\,3\,|\nabla\, D\,u| \leq\,6\,|\nabla^2\,u|
\end{equation}
shows that $\|\,u\,\|_{2,\,q}$ and $\,\|\,\na\,Du\,\|_q \,$ are
equivalent norms in $\,W^{2,\,q}(\O) \cap\,V_q(\O)\,.$

\vspace{0.2cm}

Under the homogeneous Dirichlet boundary value problem the constant
$\,C_q\,$ is bounded from above by a constant $\,K\,$ times $\,q\,.$
Clearly, this nice behavior can not hold, with the same constant
$\,K\,,$ for arbitrarily large values $q$. To each upper-bounded
interval of values $\,q\,$ it corresponds a distinct value $\,K\,.$
See \cite{yud}. We do not know whether a similar (quite predictable)
result is known for the Navier boundary condition.
\section{The strong solution. Definition.}
The main lines followed in this section have their starting point in
some ideas already used, in a more complex context, in reference
\cite{bvlali} (see, for instance, equations (4.17), (4.25), (4.26),
and (4.27) in this last reference). Since
$$
\nabla\,(\,\m+|\,D\,u|^2\,)^{\frac{p-2}{2}}=\,\frac{p-2}{2}\,
(\,\m+|\,D\,u|^2\,)^{\frac{p-4}{2}}\,\nabla\,(\,|D\,u|^2)\,,
$$
straightforward calculations show that%
\be\label{formula}\ba{ll}\vspace{1ex}\displaystyle \nabla \cdot
\,\big(\,(\,\m+|\,D\,u|^2\,)^{\frac{p-2}{2}}\,D\,u\,\big)=\,
(\,\m+|\,D\,u|^2\,)^{\frac{p-2}{2}}\,
\,\nabla \cdot (Du\,) \\
\dy +\,(p-2)\,(\,\m+|\,D\,u|^2\,)^{\frac{p-4}{2}}\,I(u)%
\ea\ee%
where, by definition,
$$
I(u)=\,\frac12 \,\nabla\,(\,|D\,u|^2)\cdot\, D\,u=\,(\,Du :\,\nabla
Du\,)\cdot\,Du\,.
$$
The $j$ component of the vector field $I(u)$ is given by
$$
I_j(u)=\,\sum_k \,\sum_{l,\,m}\, D_{l m} (\pa_k\,D_{l m}) \,D_{k
j}\,.
$$
By improving an argument already used in \cite{bvlali}, we may prove
(as in the proof of Lemma 3.4 in \cite{nonhom}) the algebraic
relation
$$
|I\cdot \,\xi| \leq\,|D|^2\,|\nabla \,D\,u|\,|\xi|\,,
$$
for each arbitrary vector field in $\,\xi\in\,\R^N\,$, where
$$
|\nabla \,D\,u|^2=\,\sum_{m,l,k} \,(\pa_k\,D_{m\,l})^2\,.
$$
Consequently, the pointwise estimate
\begin{equation}\label{buf2}
|I(u)| \leq\,|D|^2\,|\nabla \,D\,u|
\end{equation}
holds.\par%
Next we introduce the notion of strong solution used in the next
section.
\begin{definition}\label{noitidia}
Assume that $\,\m >\,0\,,$ and let $\,f \,\in \, L^q(\O)\,$ be
given, $q>1\,$. We say that $\,u\in\,W^{2,\,q}(\O)\,$ is a
{\rm{strong solution}} of problem \eqref{NSC}, \eqref{bcns} if
$\,u\,$ satisfies \eqref{bcns} in the trace sense and, moreover,
 the equation
\be\label{formfina}%
-\,\nabla \cdot\,(D\,u) =(p-\,2)\,G(u)+\,
(\,\m+|\,D\,u|^2\,)^{\frac{p-2}{2}}\,f%
\ee%
holds almost everywhere in $\O\,,$ where
$$
G(v)=\,(\,\m+|\,D\,v|^2\,)^{-1}\,I(v)\,.
$$
\end{definition}
Note that $ G(v) \leq\,|\nabla \,D\,v|\,,$ almost everywhere in
$\,\O\,,$ for all $\m\,$. So,
\begin{equation}\label{geve}
\|\,G(v)\,\|_q \leq\,\|\nabla \,D\,v\|q\,.
\end{equation}
\section{Existence of the strong solution for $\m>\,0\,.$}\label{squattro}
Fix $\,\m>\,0\,,$ and let $\,f \in \,L^q(\O)\,$. Following
\cite{BVCRI}, by appealing to a fixed point argument, one proves the
existence of a (unique) strong solution $\,u\in\,W^{2,\,q}(\O)\,$ of
the above problem. Let us sketch the proof.

\vspace{0.2cm}

Since $q>n\,,$ there is a constant $\,\cC(q,\,\O)\,$ such that
\be\label{c7}%
\|\,D\,v\|_{\infty}\leq \,\cC\,\|\nabla\,D\,v\,\|_q\,,%
\ee%
for all $v\,\in\,W^{2,\,q}(\O) \cap\, V_q(\O)\,.$  Hence,
\begin{equation}\label{holdes}%
\|\,|\,D v |^{2-p}\,f\|_q\leq \dy\|\,D v\,\|_{\infty}^{2-p}\,
\|\,f\|_q \,
\leq\,\cC^{2-\,p}\,\|\nabla\,D\,v\,\|^{2-\,p}_q\,\|\,f\|_q\,.
\end{equation}%
Further, since $\,(a+\,b)^\alpha \leq \,a^\alpha + b^\alpha\,$
for nonnegative $a$ and $b$, and  $ 0 <\alpha<1\,,$ it follows that%
\begin{equation}\label{cjen}
(\m+\,|D\,v|^2)^{\frac{2-p}{2}}\leq
\,\mu^{\frac{2-p}{2}}+\,|D\,v|^{2-p}\,.%
\end{equation}%
From \eqref{holdes} and \eqref{cjen} we show that
\be\label{lap1w}%
\|\,(\m+\,|\,D\,v\,|^2)^{\frac{2-p}{2}} \,f\|_q \leq
\,\mu^{\frac{2-p}{2}}\,\|\,f\,\|_q
+\,\cC^{2-\,p}\,\,\|\,\na\,D v\,\|^{2-\,p}_q\,\|\,f\|_q\,.%
\ee%
Next we define the convex closed set
\begin{equation}\label{krapa}%
\K=\,\K(R)=\,\{ v\in\,W^{2,\,q}(\O)\cap\,V_q(\O)\,:\,\| \na D\,v\|_q
\leq\,R\,\}\,,
\end{equation}%
and consider, for each $v \in\,\K\,,$ the solution $u=\,T(v)\,$ to
the problem
\be\label{formfina3}%
-\,\nabla\,\cdot\,D\,u\,=\,F(v)\equiv (p-\,2)\,G(v)+\,
\,(\,\m+|\,D\,v|^2\,)^{\frac{2-\,p}{2}}\,f\,,%
\ee%
under the boundary conditions \eqref{bcns}\,.\par%
By appealing to equations \eqref{tse}, \eqref{geve}, and
\eqref{lap1w}, we obtain the estimate
\begin{equation}\label{bingas}%
\|\,\na\,Du\,\|_q \leq\,C_q\,\{\,(2-\,p)\,\|\,\na\,Dv\,\|_q +\,
\mu^{\frac{2-p}{2}}\,\|\,f\,\|_q
+\,\cC^{2-\,p}\,\,\|\,\na\,D v\,\|^{2-\,p}_q\,\|\,f\|_q\,\}\,.%
\end{equation}
Next we show that if $\|\,\na\,Dv\|_q\leq R\,$ then the
corresponding solution $u=\,T(v)$ satisfies the same estimate,
namely $\|\,\na\,Du\|_q\leq R\,$. This shows that $T(\K) \subset \,\K\,.$\par%
Since $\,v \in \K$ it follows that%
\be\label{ssim}%
\|\,\na\,Du\,\|_q \leq \,\mu^{2-p}\,C_q\,\|f\|_q\,+\,(2-p)\,\,C_q\,R
\,+\,C_q\, \cC^{2-\,p} \,\|\,f\|_q \,R^{2-\,p}\,.
\ee%
By assuming \eqref{cqom}, we show that $\,u \in\,\K(R)\,$ if
$$
[\,1-\,(2-\,p)\,C_q\,]\,R \geq \,
\,\mu^{2-p}\,C_q\,\|f\|_q\,+\,C_q\, \cC^{2-\,p} \,\|\,f\|_q
\,R^{2-\,p}\,.
$$
This inequality is satisfied if, for instance, its left hand side is
equal to two times the sum of the two terms on the right hand side.
This holds for
\begin{equation}\label{condidois}
R=\,\frac{2}{\alpha}\,\mu^{2-p}\,C_q\,\|f\|_q +\,
(\,\frac{2\,C_q\,\cC^{2-\,p}}{\alpha}\,)^{\frac{1}{p-\,1}}\,\|\,f\|^{\frac{1}{p-\,1}}_q\,,
\end{equation}
where $\,\alpha=\,1-\,C_2(q)\,(2-\,p)\,.$ Hence $\|\,\na\,Du\,\|_q
\leq\,R\,,$ and the inclusion $\,T(\K) \subset \,\K\,$ follows. This
is the main ingredient to prove the existence of a fixed point in
$\,\K\,.$ For the missing details we refer to
the argument developed in reference \cite{BVCRI}.\par%
The expression of $\,R\,$ shows that the uniform estimate
\eqref{dnq} follows. Actually, we have shown that,
for each positive $\m\,,$ the estimate%
\begin{equation}\label{dnq2}
\|u^\m\|_{2,q}\leq C \,\left(\|f\|_q+\|f\|_{q}^\frac{1}{p-1}\right)
\end{equation}
holds, where $\,u^\m\,$ denotes the strong solution related to the
particular positive value $\,\m\,$.
\section{Existence of the strong solution for $\m=\,0\,.$}\label{scinque}
In this section, since the estimate \eqref{dnq2} is uniform with
respect to values $\,\m\,$ (assumed to be bounded from above), by
appealing to a compactness argument, we pass to the limit, as $\m$
tends to zero, in the weak formulation \eqref{bufi} (which contains
the singular case $\m=0$) and prove that the weak solution $u$ to
the singular problem also belongs to $W^{2,q}(\O)\,,$ and satisfies
\eqref{dnq}.\par%
We start by recalling the definition of weak solution $\,u^\m\,$ of
problem \eqref{NSC}, for $\,\m \geq\,\,0\,:$
\begin{equation}\label{buf23}
\int_{\O}\
\left(\,\m+|\,D\,u^\m|^2\,\right)^{\frac{p-2}{2}}\,D\,u^\m\, \cdot
D\, v \,dx\ =\,\int_{\O}\, f\, \cdot\, v \,dx\,,
\end{equation}
for all $\,v \in \,V_p(\O)\,.$ This condition is satisfied by the
strong solutions $\,u^\m\,,$ for $\,\m >\,0\,,$ constructed in the
previous section. Since these solutions are uniformly bounded in
$W^{2,\,q}(\O)\,,$ suitable sub-sequences, which we continue to
denote by $\,u^\mu\,$, weakly converge in $W^{2,\,q}(\O)\,$ to some
$\,u\,$. The argument followed in \cite{BVCRI} shows
that we may pass to the limit in \eqref{buf23} to prove that%
\begin{equation}\label{buf24} \int_{\O}\
\,|\,D\,u|^{\frac{p-2}{2}}\,D\,u\, \cdot D\, v
\,dx\ =\,\int_{\O}\, f\, \cdot\,v \,dx\,,%
\end{equation}
for all $\,v \in \,V_p(\O)\,.$ So, $\,u \in\,W^{2,\,q}(\O)\,$ is the
solution (known to be unique), corresponding to $\m=\,0\,.$ To prove
the above claim, we have to show that, for each fixed $\,v \in
\,V_p(\O)$, the left hand side of equation \eqref{buf23} converges
to the left hand side of \eqref{buf24}. Essentially, the proof
followed in reference \cite{BVCRI} section 4 applies here. For the
reader's convenience, we repeat the main argument here.\par%
Since $\,u^\mu\,\rightharpoonup u\,$ weakly in $W^{2,q}(\O)\,$, and
$\,q>\,n\,,$ strong convergence (of suitable subsequences) in
$\,W^{1,s}(\O)\,$, for any $\,s\,,$ follows. So, strong convergence in $W^{1,p}(\O)$ holds.\par%
We write the integral on the left-hand side of \eqref{buf23} as
\be\label{aaa2} \int_{\Omega} \,\big[\,\left(\,\mu+|\,D
u^\mu|^2\,\right)^{\frac{p-2}{2}}\,D u^\mu\, - \,\left(\,\mu+|\,D
u\,|^2\,\right)^{\frac{p-2}{2}}\,D u\,\big] \cdot D v \,dx \ee
$$
 +\,\int_{\Omega} \,\left(\,\mu+|\,D
u|^2\,\right)^{\frac{p-2}{2}}\,D u\, \cdot \,D v \,dx\,,
$$
and show that the first integral tends to zero, and the second
integral tends to the left hand side of \eqref{buf24}. The
inequality%
\be\label{tensorS1}|\,(\mu+|A|)^{\frac{p-2}{2}}
A-(\mu+|B|)^{\frac{p-2}{2}} B| \leq\,
C\,\frac{|A-B|}{(\mu+|A|+|B|)}{\!\atop ^{{2-p}}}\,,\ee%
where $C$ is independent of $\mu$ (see \cite{DER} equation (6.8)),
shows that the absolute value of the first integral in equation
\eqref{aaa2} is bounded by%
$$
C\,\int_{\Omega}\! \,\left(\,\mu+|\,D\,u\,|+|\,D\,
u^\mu|\,\right)^{p-2}\,|\,D\,u-\,D u^\mu\,|\,|\,D \,v|\, dx\,.
$$
Since
$$
\left(\,\mu+|D u\,|+|D u^\mu|\,\right)^{p-2}\,|\,D u-\,D u^\mu\,|
\leq\,\left|\,D u-\,D u^\mu\,\right|^{p-1}\,,
$$
the absolute value of the first integral in equation \eqref{aaa2}
is bounded by%
$$
C\,\|\,D\,
u^\mu\!-\,D\,u\,\|_p^{p-1}\,\|\,D\,v\,\|_p\,,%
$$
which tends to zero with $\,\mu\,.$\par%
Finally,
$$
 \lim_{\mu\to 0^+}\int_{\Omega}
\,\left(\,\mu+|\,D\,u\,|^2\,\right)^{\frac{p-2}{2}}\,D\,u\, \cdot
\,D\,v  \,dx= \,\int_{\Omega} \,|\,D\, u\,|^{p-2}\,\,D
u \cdot \,D\,v \, dx\,,%
$$
by Lebesgue's dominated convergence theorem.\par%

\section{On a related slip boundary condition}\label{ssei}
In this section we consider the system \eqref{NSC} under the slip
boundary condition
\begin{equation}\label{bcnos}
\left\{
\begin{array}{l}
\ou\cdot \n=\,0,\\
\oo(u) \times \,\n=\,0\,, \; \textrm{on} \; \Ga\,,
\end{array}
\right.
\end{equation}
where $\,\oo=\,\oo(u)=\, curl \,\ou\,,$ and $\,n=\,N=\,3\,.$ The
boundary condition \eqref{bcnos}, introduced by C. Bardos in
reference \cite{bardos}, has been studied by a large number of
authors.\par%
In the following preliminary approach to the above problem, we start
by a sketch of the proof of the existence of the strong solution,
under the assumption $\,\m>\,0\,$. Moreover, an uniform
$\,W^{2,\,q}(\O)\,$ estimate is claimed. However, the case
$\,\m=\,0\,$ is not considered here, due to the lack of a suitable
definition of weak solution. This point will be discussed below.\par%
Concerning the existence of a strong solution for each $\,\m>\,0\,,$
by taking into account definition \ref{noitidia}, and by appealing
to \eqref{NSC} and \eqref{formula}, we say that $u$ is a {\rm{strong
solution}} of problem \eqref{NSC}, \eqref{bcnos} if
$\,u\in\,W^{2,\,2}(\O)\,$ enjoys the boundary condition
\eqref{bcnos}, and satisfies equation \eqref{formfina} almost
everywhere in $\O\,$. We write here the
equation \eqref{formfina} in the equivalent form%
\be\label{formfina2}%
\ba{ll}\vspace{1ex}\displaystyle%
-\,\Delta\,u -\, \nabla\,(\nabla \cdot\,u\,)= \\
\dy (p-\,2)\,(\,\m+|\,D\,u|^2\,)^{-1}\,I(u)+\,
(\,\m+|\,D\,u|^2\,)^{\frac{p-2}{2}}\,f\,.%
\ea\ee%

\vspace{0.3cm}

Let us prove the following result.
\begin{lemma}
The following identity holds.

\be\label{ospois} \ba{ll}\vspace{1ex}\displaystyle%
\int_{\O} \,\nabla\,(\nabla \cdot\,u\,)\cdot \,\De\,v
\,dx=
\\
\dy \,\int_{\O} \,\na\,(\nabla \cdot\,u\,)\cdot \,\na\,(\nabla
\cdot\,v\,) \,dx  -\, \int_{\Ga} \,\nabla\,(\nabla
\cdot\,u\,)\cdot\,(\, \n \times \,\oo(v)\,) \,d\Ga\,.%
\ea\ee%
In particular
\begin{equation}\label{ochey3}
\int_{\O} \,\nabla\,(\nabla \cdot\,u\,)\cdot\,\Delta\,u \,dx=\,
\int_{\O} \,|\nabla\,(\nabla \cdot\,u\,)|^2 \,dx  -\, \int_{\Ga}
\,\nabla\,(\nabla \cdot\,u\,)\cdot\,(\, \n \times \,\oo(u)\,)
\,d\Ga\,.
\end{equation}
\end{lemma}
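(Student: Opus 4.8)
The plan is to establish the first identity \eqref{ospois} by integrating by parts twice in the volume integral on its left-hand side, and then to obtain \eqref{ochey3} simply by setting $v=u$. Since the identity is between continuous functionals of $(u,v)$ in the $W^{2,2}(\O)$ topology (every term involves at most second derivatives, and the boundary terms only traces of first derivatives), it suffices to prove it for smooth $u,v$ and extend by density. Throughout I work in components with the summation convention and set $\phi=\na\cdot u$ and $\psi=\na\cdot v$, so that the left-hand integrand is $(\pa_j\phi)(\pa_k\pa_k v_j)$.

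First I would integrate by parts in $x_k$, moving one derivative off the Laplacian onto $\na\phi$:
$$\int_\O (\pa_j\phi)(\pa_k\pa_k v_j)\,dx = \int_\Ga(\pa_j\phi)(\pa_k v_j)\,n_k\,d\Ga - \int_\O(\pa_k\pa_j\phi)(\pa_k v_j)\,dx.$$
Then I would integrate by parts a second time in the remaining volume term, now in $x_j$, so as to free the index $j$ on $v$ and produce a divergence:
$$-\int_\O(\pa_j\pa_k\phi)(\pa_k v_j)\,dx = -\int_\Ga(\pa_k\phi)(\pa_k v_j)\,n_j\,d\Ga + \int_\O(\pa_k\phi)(\pa_k\psi)\,dx,$$
where I used $\pa_j(\pa_k v_j)=\pa_k(\na\cdot v)=\pa_k\psi$. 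The last volume integral is exactly $\int_\O \na(\na\cdot u)\cdot\na(\na\cdot v)\,dx$, the first term on the right-hand side of \eqref{ospois}.

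It remains to combine the two surface integrals into the stated curl term, and this is the delicate step. After relabelling $j\leftrightarrow k$ in the second boundary integral, the two contributions merge into
$$\int_\Ga(\pa_j\phi)\,n_k\,\big[\,\pa_k v_j - \pa_j v_k\,\big]\,d\Ga,$$
so the symmetric part of $\na v$ cancels and only the antisymmetric part survives. In three dimensions this antisymmetric combination is the curl: writing $\oo_i(v)=\epsilon_{ipq}\pa_p v_q$ and contracting with the Levi-Civita identity $\epsilon_{ikj}\epsilon_{ipq}=\delta_{kp}\delta_{jq}-\delta_{kq}\delta_{jp}$ gives $\pa_k v_j-\pa_j v_k=\epsilon_{ikj}\,\oo_i(v)$. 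Substituting and using $\epsilon_{ikj}=\epsilon_{jik}$ identifies the integrand as the triple product $(\pa_j\phi)\,(\oo(v)\times\n)_j=\na\phi\cdot(\oo(v)\times\n)=-\,\na\phi\cdot(\n\times\oo(v))$, which is precisely the boundary term in \eqref{ospois}. Specializing $v=u$ then yields \eqref{ochey3}.

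The main obstacle lies entirely in the bookkeeping of the boundary term: one must check that the two surface integrals produced by the successive integrations by parts do not cancel the wrong way, and that the surviving antisymmetric piece is matched to $\n\times\oo(v)$ with the correct orientation. The sign traps are the convention for the Levi-Civita symbol (equivalently, the orientation of $\oo=\mathrm{curl}\,v$) and the antisymmetry $\n\times\oo=-\,\oo\times\n$; a single misplaced sign there would flip the boundary term. No genuine regularity subtlety intervenes, since the density argument reduces everything to the smooth case in which the two integrations by parts are classical.
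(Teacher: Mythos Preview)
Your proof is correct. Both integrations by parts are carried out properly, and the identification of the surviving antisymmetric boundary piece with $-\,\nabla(\nabla\cdot u)\cdot(\n\times\oo(v))$ via the Levi--Civita symbol is sound; your concern about sign traps is well placed but you have resolved them correctly.

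Your route, however, differs from the paper's. The paper does not integrate by parts twice in components. Instead it invokes the vector identity
\[
\Delta v=\nabla(\nabla\cdot v)-\nabla\times\oo(v),
\]
which immediately produces the volume term $\int_\Omega\nabla(\nabla\cdot u)\cdot\nabla(\nabla\cdot v)\,dx$ and leaves only $-\int_\Omega\nabla(\nabla\cdot u)\cdot(\nabla\times\oo(v))\,dx$ to treat. That remaining integral is handled by the standard curl integration-by-parts formula $\int_\Omega f\cdot(\nabla\times g)\,dx=\int_\Omega(\nabla\times f)\cdot g\,dx+\int_\Gamma f\cdot(\n\times g)\,d\Gamma$, together with the observation that $\nabla\times\nabla(\nabla\cdot u)=0$, so the volume piece vanishes and only the boundary term survives. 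This is shorter and entirely coordinate-free: no Levi--Civita bookkeeping, no merging of two separate surface integrals, and the sign of the boundary term is dictated by a single well-known formula. Your approach, by contrast, is more elementary in that it relies only on scalar integration by parts and does not presuppose the vector Laplacian decomposition; the price is the index manipulation you flagged as the ``main obstacle''.
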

\begin{proof}
Since
$$
\De \,v =\,\nabla\,(\nabla \cdot\,v\,)-\, \na \times\,\oo(v)\,,
$$
it follows that%
\begin{equation}\label{antes}%
\int_{\O} \,\nabla\,(\nabla \cdot\,u\,)\cdot \,\De\,v
\,dx=\,\int_{\O} \,\na\,(\nabla \cdot\,u\,)\cdot \,\na\,(\nabla
\cdot\,v\,) \,dx -\,\int_{\O}
\,\nabla\,(\nabla \cdot\,u\,)\cdot\, \na \times \,\oo(v)\, dx\,.%
\end{equation}%
On the other hand, by appealing to the identity
$$
\int_{\O} \,f\cdot\,(\na \times\,g\,) \,dx =\,\int_{\O} \,(\na
\times\,f\,) \cdot \,g \,dx +\,\int_{\Ga} \,f\cdot\,(\n \times\,g\,)
\,d\Ga\,,
$$
we get
$$
\int_{\O} \,\nabla\,(\nabla \cdot\,u\,)\cdot\, \na \times \,\oo(v)\,
dx=\, \int_{\Ga} \,\nabla\,(\nabla \cdot\,u\,)\cdot\,(\, \n \times
\,\oo(v)\,) \,d\Ga\,.
$$
\end{proof}
Note that the boundary integral in equation \eqref{ochey3} vanishes
if $\,u\,$ satisfies the boundary condition $\,(\na \times\,u)
\times \,\n=\,0\,$ on $\Ga$. So
\begin{equation}\label{ochey}
\int_{\O} \,\nabla\,(\nabla \cdot\,u\,)\cdot\,\Delta\,u \,dx=\,
\int_{\O} \,|\nabla\,(\nabla \cdot\,u\,)|^2 \,dx\,.
\end{equation}
Multiplication of both sides of \eqref{formfina2} by
$\,\Delta\,u\,,$ followed by integration in $\,\O\,,$ together with
\eqref{ochey} and \eqref{buf2}, leads to the following a priori
estimate, uniform with respect to $\,\m>\,0\,.$

$$
\ba{ll}\vspace{1ex}\displaystyle%
\|\Delta\,u\|^2_2 +\,\|\nabla\,(\nabla \cdot\,u\,)\|^2_2 \leq
\\
\dy (2-\,p)\,\int_{\O} \,|\nabla \,D\,u|\,\cdot\,\Delta\,u \, dx \,+%
\int_{\O} \,(\,\m+|\,D\,u|^2\,)^{\frac{2-\,p}{2}} \,|\,f\,|
\,|\Delta\,u| \,dx\,.%
\ea%
$$
This shows that, for each $\,\m> \,0\,,$ it should be not difficult
to prove the existence of a strong solution $\,u\,$ in
$\,W^{2,\,2}(\O)\,,$ under the smallness assumption on $\,2-\,p\,.$
By appealing to Agmon-Douglis-Nirenberg results, this should lead to
an estimate of $\,u\,$ in $\,W^{2,\,q}(\O)\,,$ uniform with respect
to $\,\m>\,0\,.$ Clearly, we have to assume a restriction (like
\eqref{cqom}) relying the exponents $\,q\,$ and $\,p\,.$ The
estimates obtained are uniform with respect to $\,\m>\,0\,$.
However, a suitable definition of weak solution, for $\,\m
\geq\,0\,,$ must be established, as a previous step to try to
"pass to the limit" as $\,\m\,$ goes to zero. Let us discuss this point.\par%
We start by some identities. Since $\,(\pa_i \, u_k -\,\pa_k u_i
)\,n_i=\,(\oo \times\,\n)_k\,,\,$ the identity
\begin{equation}\label{frontum}%
(D\,u)\,\cdot\,\n \,\cdot\,v \equiv \,(\pa_k\,u_i+\,\pa_i\,u_k)
n_i\,v_k =\,(\oo \times\,\n)\cdot\,v  +\,2\,(\pa_k u_i ) v_k\,n_i%
\end{equation}
follows. Further, from
\begin{equation}\label{frontdoi}
(\pa_k u_i ) v_k\,n_i =\, \na (u\cdot\,\n)\,\cdot v -\,(\pa_k n_i )
v_k\,u_i\,,
\end{equation}
one gets
\begin{equation}\label{frontre}
(\,D\,u\,)\cdot\,v\cdot\,n =\,(\oo \times\,\n)\cdot\,v\,+\,2\,\na
(u\cdot\,\n)\,\cdot v -\,2\,(\pa_k n_i ) v_k\,u_i\,.
\end{equation}
It follows, in particular, that on flat portions of the boundary the
conditions
\eqref{bcns} and \eqref{bcnos} are equivalent.\par%
By appealing to \eqref{bufe} and \eqref{frontre}, and by assuming
that $\,u\cdot\,n=\,v\cdot\,n=\,0\,$ on $\,\Ga\,,$ we show that
(recall definition \eqref{buh})
\be\label{bufia} \ba{ll}\vspace{1ex}\displaystyle%
\frac12 \,\int_{\O}\ \ B (D\,u)\, \cdot D\,u \cdot D\, v \,dx
=\,-\,\int_{\O}\,\na \cdot\,\big(\,B(D\,u)\,D(u)\,\big)
\cdot\,v \,dx\,  \\
\dy +\,\int_{\Ga} \,B(D\,u)\,(\oo \times\,\n)\cdot\,v\, dS
-\,2\,\int_{\Ga} \,B(D\,u)\,(\pa_k n_i ) v_k\,u_i\, dS\,.%
\ea\ee%
The identity \eqref{bufia} would justify to call $\,u\,$ a weak
solution of problem \eqref{NSC}, \eqref{bcns} if $\,u\,\in\,
V_p(\O)$ satisfies
\be\label{bufies}\ba{ll}\vspace{1ex}\displaystyle%
\frac12 \,\int_{\O}\ \ B (D\,u)\, \cdot D\,u \cdot D\, v \,dx
+\,2\,\int_{\Ga} \,B(D\,u)\,(\pa_k n_i ) v_k\,u_i\, dS= \\
\dy =\,\int_{\O}\, f\, \cdot\, v \,dx\,,
\ea\ee%
for all $\,v \in \,V_p(\O)\,$. Note that if the boundary integral in
equation \eqref{bufies} vanishes for all $\,v\,$ such that
$\,v\cdot\,n=\,0\,$, then $\,B(D\,u)\,(\oo \times\,\n)=\,0\,$ on
$\,\Ga\,.$ Since $\,B(D\,u)\neq \,0\,$, the second boundary
condition \eqref{bcns} follows. However, the boundary integral in
equation \eqref{bufies} is not well defined due to the term
$B(D\,u)\,,$ except for $\,p=\,2\,.$\par%
Note that, in equation \eqref{bufies}, for $\,v=\,u\,$  and  $\O\,$
convex, the integrand in the boundary integral is nonnegative. So,
in a convex domain, we may obtain, at least, an a priori estimate in
$\,W^{1,\,p}(\O)\,$.
\section{On the Fluid Mechanics system}\label{ssetes}
The proof of theorem \ref{teoremaq} may be immediately adapted to
the case $\,q<\,n\,$, also considered in \cite{BVCRI} and
\cite{nonhom}. In the case $\,q<\,n\,$, see \cite{BVCRI}, the
assumption $\,f\in L^q(\O)\,$ does not imply
$\,u\in\,W^{2,q}(\O)\,.$ This last regularity result requires a
stronger assumption on $\,f\,$, namely $\,f\in\,L^{r(q)}(\O)\,,$
where $\,r(q)\,$ is given by
\be\label{rqqq}%
r(q)=\,\frac{nq}{n(p-1)+q(2-p)}\,.%
\ee%
Note that $\,r(q)>\,q\,,$ and $\,r(n)=\,n\,$.\par%
Since, on the whole, regularity results are stronger for large
values of $\,q\,$, in \cite{BVCRI} the authors have assumed, for
convenience, that $\,q\geq\,2\,.$ This assumption excludes the
significant case of square integrable external forces
$\,f\in\,L^2(\O)\,.$ In fact, by \eqref{rqqq}, $\,r(q)=\,2\,$ holds
for $\,q=\,\cq\,$ given by
\begin{equation}\label{errq2}%
\cq =\,\frac{2\,n\,(\,p-\,1\,)}{n-\,2\,(\,2-\-p\,)}<\,2\,.
\end{equation}
However, the proof shown in reference \cite{BVCRI} also applies to
values $\,q<\,2\,$, in particular to $\,\cq\,.$ The (really obvious)
modification required to adapt the proof to this case was shown in
reference \cite{BV-ARX}. In this last reference we were interested
in the particular case $\,r(q)=\,2\,,$ i.e. $\,q=\,\cq\,.$ In
proposition 2.1 in \cite{BV-ARX} we basically remark that if $ f\in
L^2(\O)$ then $u$ belongs to $W^{2,\,\cq}(\O)\,.$ Moreover,
\begin{equation}\label{dnqn}
\|u\|_{2,\cq}\leq C
\,\left(\|f\|_{\cq}+\|f\|_{2}^\frac{1}{p-1}\right)\,.
\end{equation}
On the other hand, during a recent meeting in Levico (December
2012), Jakub Tich\'y informed us about some new results obtained in
collaboration with Petr Kaplick\'y, in reference \cite{kapli}. The
very interesting results obtained by these authors concern the
generalized Stokes problem
\begin{equation}\label{NSCCC}\left\{
\begin{array}{ll}\vspace{1ex}
-\,\nabla \cdot
\,\big(\,(\,\m+|\,D\,u|^2\,)^{\frac{p-2}{2}}\,D\,u\,\big) +\,\na
\,\pi =\,f\,,
\\%
\,\na \cdot\,u=\,0\,,
\end{array}\right.
\end{equation}
under the Navier boundary condition \eqref{bcns}. Actually they
consider more general constitutive relations $\,S(D(u))\,.$ In
particular (see the theorem 1.8 in reference \cite{kapli}), under
natural assumptions on the external force $\,f\,,$ the solution to
problem \eqref{NSCCC}, under the Navier boundary condition,
satisfies
\begin{equation}\label{naosei}
-\,\nabla \cdot
\,\big(\,(\,\m+|\,D\,u|^2\,)^{\frac{p-2}{2}}\,D\,u\,\big)
\in\,\L^2(\O)\,.
\end{equation}
So, by appealing to our theorem \ref{teoremaq} (adapted, as
described above, to the value $\,q=\,\cq\,$), it follows that the
solutions to problem \eqref{NSCCC} belong to $W^{2,\,\cq}(\O)\,.$
Clearly, we have to assume that condition \eqref{cqom} holds for the
value $\,q=\,\cq\,.$

\vspace{0.2cm}

We take the occasion to announce that some new results concerning
the system \eqref{NSCCC} in the torus will be shown in a forthcoming
paper.

\end{document}